\newtheorem{prop} {Proposition} [section]
\newtheorem{thm}[prop] {Theorem}
\newtheorem*{defi-non}{Definition}
\newtheorem{defi and lem}[prop] {Definition/Lemma}
\newtheorem{cor}[prop]{Corollary}
\newtheorem{rem}[prop]{Remark}
\newtheorem{prop-def}[prop]{Proposition-Definition}
\begin{document}

\title[Discriminants of Cubic Curves and Determinantal Representations]{Discriminants of Cubic Curves and Determinantal Representations}

\author{Manh Hung Tran}
\address{Ericsson, Stockholm}\email{tranmanhhungkhtn@gmail.com}

\maketitle

\begin{abstract}
The discriminant of a smooth plane cubic curve over the complex numbers can be written as a product of theta functions. This provides an important connection between algebraic and analytic objects. In this paper, we perform a new approach to obtain this classical result by using determinantal representations. More precisely, one can represent a non-singular cubic form as the determinant of a matrix whose elements are linear forms. Theta functions naturally appear in this representation and thus in the discriminant of the cubic.

\end{abstract}

\tableofcontents

\section{Introduction}\label{section introduction}

The discriminant of a plane cubic curve is a polynomial of degree 12 in coefficients of the cubic with 2040 monomials (see \cite[p. 4]{Gelfand}). But over $\mathbb{C}$, we have short expressions in terms of theta constants. Consider the classical case where our smooth projective cubic curve $C$ is defined by the affine Weierstrass equation:
$$y^2=4x^3 - g_2x - g_3.$$
Using the Weierstrass parametrization, there exists a unique lattice $\Lambda=\omega_1 \mathbb{Z}+\omega_2 \mathbb{Z}$ with some complex numbers $\omega_1,\omega_2$ such that Im$(\omega_2/\omega_1)>0$ and $C(\mathbb{C})\cong \mathbb{C}/\Lambda$. Let $\tau:=\omega_2/\omega_1$ and apply the discriminant formula $\Delta=2^{12}(g_2^3 - 27g_3^2)$ from \cite[p. 367-368]{Villegas}, we have that
\begin{equation}\label{Disc Weierstrass}
\Delta=2^{16}\left(\frac{\pi}{\omega_1}\right)^{12}(\theta_2(0,\tau)\theta_3(0,\tau)\theta_4(0,\tau))^8.
\end{equation}
Here $\theta_2,\theta_3$ and $\theta_4$ are the three even Jacobi theta functions. The details of theta functions will be described in Section \ref{section det rep of elliptic curves}.

We want to study the above discriminant formula with a new approach using determinantal representations. For a homogeneous polynomial $\phi$, we construct a matrix $U$ whose elements are linear forms such that we can write $\phi=\lambda\det(U)$ for some constant $\lambda\neq 0$. The study of $\phi$ has thus been moved to the study of the matrix $U$. In general, only plane curves and quadratic, cubic surfaces admit a determinantal representation as confirmed in \cite{Dick}. The reader can have a look at \cite{Beau} for a general discussion of this topic.

Starting with Weierstrass cubics, we find theta functions in their determinantal representations as well as in the discriminants. Denote by
\begin{equation}\label{abc}
a=\theta_{2}(0,\tau), \text{ } b=\theta_{3}(0,\tau), \text{ } c=\theta_{4}(0,\tau)
\end{equation}
the theta constants. We will prove the following:
\begin{thm}\label{theorem det rep of elliptic curves}
Let $C_{\phi}$ be a smooth curve given by the Weierstrass form
$$\phi(x,y,z)=y^2z-4x^3+g_2xz^2+g_3z^3,$$
where $g_2$ and $g_3$ belong to a field $K$. Then $\phi$ admits determinantal representations
$$\left(
  \begin{array}{cccc}
    2x+tz & y+dz & (3t^2-g_2)z \\
    0 & x-tz & y-dz \\
    z & 0 & -2x-tz \\
  \end{array}
\right),$$
with $t,d\in \overline{K}$ being arbitrary such that $d^2=4t^3-g_2t-g_3$. When $K=\mathbb{C}$, there is a natural choice for $t,d$ which produces a determinantal representation for $\phi$ in terms of theta constants as follows
$$\left(
  \begin{array}{ccc}
    2x-\frac{\pi^2}{3\omega_1^2}(a^4+b^4)z & y & -(\frac{\pi}{\omega_1})^4 c^8 z \\
    0 & x+\frac{\pi^2}{3\omega_1^2}(a^4+b^4)z & y \\
    z & 0 & -2x+\frac{\pi^2}{3\omega_1^2}(a^4+b^4)z \\
  \end{array}
\right).$$
Here the even theta constants $a,b,c$ were defined as in \eqref{abc}.
\end{thm}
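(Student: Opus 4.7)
The plan is to treat the two claims separately. For the first claim, the proof is a direct determinant computation. Expanding along the first column (which has only two nonzero entries, $2x+tz$ and $z$) produces, after routine simplification, an expression of the form
$$-4x^3 + y^2 z + g_2\, xz^2 + (4t^3 - g_2 t - d^2)\, z^3.$$
The hypothesis $d^2 = 4t^3 - g_2 t - g_3$ is precisely what is needed to collapse the coefficient of $z^3$ to $g_3$, at which point the determinant becomes $\phi(x,y,z)$. This argument is purely algebraic and works over any field $K$, so it also produces the required family of representations parametrised by points $(t,d)\in C_{\phi}(\overline{K})$.

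For the second claim, the idea is to specialise the first claim at a two-torsion point of $C_{\phi}(\mathbb{C})$. Under the Weierstrass uniformisation $C_{\phi}(\mathbb{C}) \cong \mathbb{C}/\Lambda$, the three nontrivial two-torsion points correspond to the half-periods $\omega_1/2$, $\omega_2/2$ and $(\omega_1+\omega_2)/2$, whose $\wp$-values $e_1,e_2,e_3$ are exactly the three roots of $4t^3 - g_2 t - g_3$. Each choice $(t,d) = (e_j, 0)$ therefore yields a legitimate representation in which the off-diagonal entries $y\pm dz$ collapse to $y$; the displayed matrix corresponds to the choice $t = e_2$.

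To match the remaining entries to theta constants, I would invoke the classical formula $e_2 = -\tfrac{\pi^2}{3\omega_1^2}(\theta_2^4 + \theta_3^4)$ for the diagonal entries, and the standard theta expression $g_2 = \tfrac{2\pi^4}{3\omega_1^4}(\theta_2^8+\theta_3^8+\theta_4^8)$ for the top-right entry $(3t^2 - g_2)z$. A short algebraic manipulation combined with Jacobi's quartic identity $\theta_2^4 + \theta_4^4 = \theta_3^4$ then reduces $3e_2^{\,2} - g_2$ to $-(\pi/\omega_1)^4\,\theta_4^8$, which is exactly the displayed coefficient. The main obstacle, such as it is, is really just bookkeeping: one must fix conventions for the Weierstrass half-period values in terms of theta constants and have a theta-function expression for $g_2$ in the same conventions. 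Once these classical identities are in hand, nothing in the proof is more than a determinant expansion together with a short polynomial identity in $\theta_2,\theta_3,\theta_4$.
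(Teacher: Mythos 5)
Your proposal is correct and follows essentially the same route as the paper: the first part is the direct determinant check (which the paper delegates to a citation of Vinnikov but is exactly the expansion you describe), and the second part specialises $(t,d)=(\mathcal{P}(s),\mathcal{P}'(s))$ at the half-period $\omega_2/2$, where $d=0$, $t=-\frac{\pi^2}{3\omega_1^2}(a^4+b^4)$, and Jacobi's identity $a^4+c^4=b^4$ reduces $3t^2-g_2$ to $-(\pi/\omega_1)^4c^8$. The only cosmetic difference is your labelling of the relevant root as $e_2$, which is a convention choice and does not affect the argument.
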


The first part of this theorem uses the method in \cite[Section 2]{Vinnikov} where the author established similar representations for other type of Weierstrass equations of the form $y^2z=x(x+\vartheta_1 z)(x+\vartheta_2 z)$ with some constants $\vartheta_1,\vartheta_2\in K$. The discriminant formula \eqref{Disc Weierstrass} is then a consequence of the second part of this theorem using resultant as in Section \ref{section det rep of elliptic curves}.

Our goal is to study this phenomena for general smooth cubic curves using determinantal representations. One can actually provide determinantal representations for any non-rational complex plane curve by using a result in \cite{Ball} as we will see later in Section \ref{section det rep of complex curves}. This deduces in particular the formula of the discriminant of plane cubics by using resultant. Since a cubic curve $C_{\phi}$ over $\mathbb{C}$ always has a flex point, it can be transformed to a Weierstrass form after a linear coordinate change $M$ (see \cite[Section 4.4]{Cremona transform}). The resulting Weierstrass form is isomorphic to $\mathbb{C}/\Lambda$ for a unique lattice $\Lambda$ coming the Weierstrass parametrization. Write $\Lambda=\omega_1\mathbb{Z}+\omega_2 \mathbb{Z}$ for some $\omega_1,\omega_2\in \mathbb{C}$ satisfying Im$(\omega_2/\omega_1)>0$ and denote by $\tau=\omega_2/\omega_1$, we will prove the following result:

\begin{thm}\label{theorem intro disc cubic theta}
Let $C_{\phi}$ be a smooth plane cubic curve over $\mathbb{C}$ defined by a cubic form $\phi$ and $\Delta_{\phi}$ be the discriminant of $\phi$, we have
\begin{equation}\label{disc cubic}
\Delta_{\phi}=\frac{2^{16}}{\det (M)^{12}}\left(\frac{\pi}{\omega_1}\right)^{12}(abc)^8,
\end{equation}
where $a,b,c$ were defined as in \eqref{abc}.
\end{thm}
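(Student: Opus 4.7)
The plan is to reduce to the Weierstrass case already handled by Theorem~\ref{theorem det rep of elliptic curves}. By the hypothesis recalled before the statement, there is an invertible linear change of coordinates $M$ such that the cubic $\psi(X):=\phi(MX)$ is in Weierstrass form. Since $M$ induces a projective isomorphism between $C_{\phi}$ and $C_{\psi}$, both curves are uniformised by the same complex torus $\mathbb{C}/\Lambda$, so the analytic data $\omega_{1}$, $a$, $b$, $c$ appearing in \eqref{disc cubic} may be read off from either model; in particular, $\tau=\omega_{2}/\omega_{1}$ is unchanged under the transformation.

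The first, and essentially only substantial, step is to establish the transformation rule
\[
\Delta_{\psi}=\det(M)^{12}\,\Delta_{\phi}.
\]
The discriminant of a ternary cubic is a homogeneous polynomial of degree $12$ in the ten coefficients of the cubic, and its vanishing is invariant under coordinate change. Because the coefficients of $\phi\circ M$ depend polynomially and homogeneously of degree $3$ on the entries of $M$, the quantity $\Delta(\phi\circ M)$ must be of the form $(\det M)^{k}\Delta(\phi)$, and the total degree count $3k=36$ forces $k=12$. To see that no additional scalar appears, take $M=cI$: then $\phi\circ M=c^{3}\phi$, whence $\Delta(\phi\circ M)=c^{36}\Delta(\phi)=(\det M)^{12}\Delta(\phi)$.

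With this transformation rule in hand, it remains to invoke the discriminant formula \eqref{Disc Weierstrass} for the Weierstrass cubic $\psi$, which Section~\ref{section det rep of elliptic curves} extracts from the determinantal representation of Theorem~\ref{theorem det rep of elliptic curves} by a resultant computation:
\[
\Delta_{\psi}=2^{16}\left(\frac{\pi}{\omega_{1}}\right)^{12}(abc)^{8}.
\]
Dividing by $\det(M)^{12}$ yields \eqref{disc cubic}. The substantive analytic content, namely the identification of $\Delta_{\psi}$ with a product of theta constants, has already been supplied by Theorem~\ref{theorem det rep of elliptic curves}; the hardest part of the present argument is therefore comparatively minor, amounting to the degree/scalar bookkeeping carried out above together with the observation that the lattice $\Lambda$ attached to $C_{\phi}$ really does coincide with the one attached to the Weierstrass model $C_{\psi}$, which is automatic since $M$ realises a projective isomorphism between the two curves.
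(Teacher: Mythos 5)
Your argument is correct in substance, but it is not the paper's proof: it is essentially the classical invariant-theoretic argument that the paper explicitly sets aside. You reduce to the Weierstrass case via the covariance rule $\Delta(\phi\circ M)=(\det M)^{12}\Delta(\phi)$, i.e.\ the fact that the discriminant of a ternary cubic is an invariant of weight $12$; the introduction acknowledges that the theorem ``is known from invariant theory'' for exactly this reason, citing Fisher \cite[Theorem 4.4]{Fisher}, and states that the point of the paper is a \emph{new} derivation. The paper's actual route (Sections \ref{section det rep of complex curves}--\ref{section disc of plane cubic curves}) builds a determinantal representation of the general cubic directly: it applies the Ball--Vinnikov Cauchy-kernel construction (Theorem \ref{theorem det rep of complex curves}) to a line through the three points whose Abel--Jacobi images are the $2$-torsion points $\omega_1/2$, $(\omega_1+\omega_2)/2$, $\omega_2/2$, evaluates the matrix entries $n_{ij}$ in closed form via theta constants and $\mathcal{P}''$ at $2$-torsion, expands the determinant, computes $\operatorname{Res}(\phi_x,\phi_y,\phi_z)$ from that explicit cubic (Proposition \ref{proposition disc of plane cubic}), and only at the end eliminates the auxiliary constants to identify $\lambda_{11}\lambda_{22}-\lambda_{12}\lambda_{21}$ with $\det M$ (Theorem \ref{theorem general cubic theta}). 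Your route is far shorter and gives a clean conceptual reduction to Theorem \ref{theorem det rep of elliptic curves} and formula \eqref{Disc Weierstrass}; the paper's route buys the determinantal/theta-function machinery that is its stated contribution and never invokes the weight-$12$ covariance. Two small points if you keep your version: (i) the assertion that $\Delta(\phi\circ M)$ is a scalar multiple of $\Delta(\phi)$ requires the irreducibility of the discriminant (or a citation such as Fisher), since your degree count alone does not show the ratio is independent of $\phi$; and (ii) you should normalize so that $\phi\circ M$ is exactly the Weierstrass form rather than merely proportional to it, since a proportionality constant $c$ would contribute an extra factor $c^{12}$ to the discriminant.
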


This result is known from invariant theory since the discriminant of a smooth plane cubic curve is an invariant of weight 12 of the cubic form defining the curve (see Fisher \cite[Theorem 4.4]{Fisher} for instance). However, the above approach with determinantal representations is new. Theta functions naturally appear in these representations and thus a new way to compute the discriminants of cubic curves is obtained.

The discriminant formulae \eqref{Disc Weierstrass} and \eqref{disc cubic} are remarkable since they provide a connection between algebraic (discriminants) and analytic (theta functions) objects. In fact, there is a similar formula in the case of quartic curves studied by Klein \cite[p. 72]{Klein}.

The organization of this paper is as follows: Section \ref{section det rep of elliptic curves} provides an overview on theta functions, smooth cubic curves in Weierstrass forms and a proof of Theorem \ref{theorem det rep of elliptic curves}. A short introduction on Riemann surfaces is presented in Section \ref{section det rep of complex curves} and then we construct there determinantal representations for non-rational complex plane curves (see Theorem \ref{theorem det rep of complex curves}). As a consequence, a new proof for the discriminant formula of smooth plane cubic curves is accomplished in Section \ref{section disc of plane cubic curves}. In other words, we prove in this section Theorem \ref{theorem intro disc cubic theta}. An overview to Klein's formula and a known result on determinantal representations of plane quartics are described in the last section.

\section{Determinantal representations of Weierstrass cubics}\label{section det rep of elliptic curves}

We will in this section study discriminants of smooth curves in Weierstrass form and provide a proof to Theorem \ref{theorem det rep of elliptic curves}. Consider a smooth curve $C_{\phi}$ given by
\begin{equation}\label{elliptic curve}
\phi(x,y,z)=y^2z-4x^3+g_2xz^2+g_3z^3,
\end{equation}
where $g_2$ and $g_3$ are elements in a field $K$. We want to find the $3\times 3$ square matrices $L,M,N$ such that
$$\det(xL+yM+zN)=\phi(x,y,z).$$
The following determinantal representations of $\phi$:
\begin{equation}\label{matrix}
\left(
  \begin{array}{cccc}
    2x+tz & y+dz & (3t^2-g_2)z \\
    0 & x-tz & y-dz \\
    z & 0 & -2x-tz \\
  \end{array}
\right)
\end{equation}
is obtained from \cite[Section 2]{Vinnikov}, where $t,d\in \overline{K}$ be such that $d^2=4t^3-g_2t-g_3.$ It can be checked that the determinant of \eqref{matrix} is equal to $\phi$.

Now we move to the theory of theta functions to study the case when $K=\mathbb{C}$. The following discussion bases on Wang and Guo \cite{Wang}. In this case, there exists a unique lattice $\Lambda$ coming from the Weierstrass parametrization such that $C_{\phi}(\mathbb{C})\cong \mathbb{C}/\Lambda$. Here $\Lambda=\omega_1 \mathbb{Z} + \omega_2 \mathbb{Z}$ for some $\omega_1,\omega_2\in \mathbb{C}$ with $\tau=\omega_2/\omega_1\in \mathbb{H}$. The two coefficients $g_2$ and $g_3$ of the curve given by $\phi$ can be determined by (see \cite[p. 509]{Wang})
$$g_2=\frac{2}{3}\left(\frac{\pi}{\omega_1}\right)^4(a^8+b^8+c^8),$$
$$g_3=\frac{4}{27}\left(\frac{\pi}{\omega_1}\right)^6 (a^4+b^4)(b^4+c^4)(c^4-a^4),$$
where $a=\theta_{2}(0,\tau)=e^{\frac{\pi i \tau}{4}}\theta(\frac{1}{2}\tau,\tau)$, $b=\theta_{3}(0,\tau)=\theta(0,\tau)$ and $c=\theta_{4}(0,\tau)=\theta(\frac{1}{2},\tau)$ with the even Jacobi theta functions:
$$\theta(z,\tau)=\theta_3(z,\tau):=\sum_{n=-\infty}^{\infty}\exp(\pi i n^2 \tau+2\pi i nz),$$
$$\theta_{2}(z,\tau)=\exp(\pi i \tau/4+\pi i z)\theta(z+\tau/2,\tau),$$
$$\theta_{4}(z,\tau)=\theta(z+1/2,\tau).$$
The above $a,b,c$ are called even theta constants.

Since $(t,d)$ is a point on the affine curve associated to $C_{\phi}$ defined by $\{z\neq 0\}$, it is determined by theta constants via Weierstrass $\mathcal{P}$-function and so are all the coefficients in the linear matrix \eqref{matrix}. To be precise, we consider the Weierstrass $\mathcal{P}$-function associated to the lattice $\Lambda$ defined for all $s\notin \Lambda$ as
$$\mathcal{P}(s)=\mathcal{P}(s;\omega_1,\omega_2):=\frac{1}{s^2}+\sum_{(m,n)\in \mathbb{Z}^2 \setminus (0,0)}\left(\frac{1}{(s+m\omega_1+n\omega_2)^2}-\frac{1}{(m\omega_1+n\omega_2)^2}\right).$$
As in \cite[p. 469]{Wang}, it satisfies the differential equation
$$\mathcal{P}'(s)^2=4\mathcal{P}(s)^3 - g_2 \mathcal{P}(s) - g_3.$$
The point $(t,d)$ on the curve can be parametrized as $t=\mathcal{P}(s)$ and $d=\mathcal{P}'(s)$ for some $s\notin \Lambda$. It is known that the discriminant of the cubic \eqref{elliptic curve} is given by the formula
\begin{equation}\label{disc}
\Delta_{\phi}=2^{12}(g_2^3-27g_3^2)=2^{16}\left(\frac{\pi}{\omega_1}\right)^{12} (abc)^8.
\end{equation}
We will give another proof for the formula \eqref{disc} using resultant and the determinantal representation \eqref{matrix}. From \cite[p. 434]{Gelfand}, the discriminant of a homogeneous cubic polynomial $\phi(x,y,z)$ can be computed by resultant defined there as
\begin{equation}\label{disc res}
\Delta_{\phi}=-\text{Res}(\phi_x,\phi_y,\phi_z)/27.
\end{equation}
The reader can have a look at \cite[Chapter 13]{Gelfand} for a general discussion about resultants. An explicit example of computing the discriminant of cubic curves using resultant is described in \cite[Proposition 2.5, Example 2.7]{Sutherland}.

We choose the minus sign in \eqref{disc res} so that the sign of the discriminant is compatible to other sections of the paper. To simplify the computation, we choose a special value for the Weierstrass function $\mathcal{P}(s)$, namely, the 2-torsion point $s=\omega_2/2$. In this case $\mathcal{P}(\omega_2/2)=-\frac{\pi^2}{3\omega_1^2} (a^4+b^4)$ and $\mathcal{P}'(\omega_2/2)=0$ by \cite[p. 470,509]{Wang}. Then $d=0$ and $t=-\frac{\pi^2}{3\omega_1^2}(a^4+b^4)$. Besides, using the Jacobi's identity $a^4+c^4=b^4$ (see \cite[p. 504]{Wang}), the matrix \eqref{matrix} can be written in the form
\begin{equation}\label{new matrix}
\left(
  \begin{array}{ccc}
    2x-\frac{\pi^2}{3\omega_1^2}(a^4+b^4)z & y & -(\frac{\pi}{\omega_1})^4 c^8 z \\
    0 & x+\frac{\pi^2}{3\omega_1^2}(a^4+b^4)z & y \\
    z & 0 & -2x+\frac{\pi^2}{3\omega_1^2}(a^4+b^4)z \\
  \end{array}
\right).
\end{equation}
Theorem \ref{theorem det rep of elliptic curves} has thus been proved. From the representation $\phi=\det(U)$, where $U$ is given by \eqref{new matrix}, we get that
$$\phi_x=-12x^2+\frac{2}{3}\left(\frac{\pi}{\omega_1}\right)^4 (a^8+b^8+c^8)z^2,$$
$$\phi_y=2yz, \text{  and  }$$
$$\phi_z=y^2+\frac{4}{3}\left(\frac{\pi}{\omega_1}\right)^4(a^8+b^8+c^8)xz+\frac{4}{9}\left(\frac{\pi}{\omega_1}\right)^6(a^4+b^4)(b^4+c^4)(c^4-a^4)z^2.$$
The discriminant $\Delta_{\phi}$ of the cubic $\phi$ is then obtained via \eqref{disc res}
$$\Delta_{\phi}=2^{16}\left(\frac{\pi}{\omega_1}\right)^{12} (abc)^8.$$
In fact, we can directly use \eqref{disc res} to the curve \eqref{elliptic curve}. However, this approach of determinantal representations might be applied to more general cases. More details will be explained in Section \ref{section disc of plane cubic curves}.

\section{Determinantal representations of complex plane curves}\label{section det rep of complex curves}
In Section \ref{section det rep of elliptic curves}, we have already seen that one can compute the discriminant of smooth curves over $\mathbb{C}$ in Weierstrass form by using determinantal representations.  Our goal is to generalize this result to any smooth cubic curve. To do this, one can compute determinantal representations of plane curves of arbitrary degrees based on Theorem 5.1 in \cite{Ball}. We will in this section prove Theorem \ref{theorem det rep of complex curves}. Let us first introduce some notations.

Let $X$ be a compact Riemann surface of positive genus, let $\mathcal{L}$ be a line bundle of half differentials on $X$ (a theta characteristic), i.e., $\mathcal{L}^{\otimes 2}$ is the canonical bundle $\omega_X$ on $X$ and let $\chi$ be a flat line bundle over $X$ such that $h^0(\chi \otimes \mathcal{L})=0$. We associate to $\chi$ the Cauchy kernel $K(\chi;\cdot,\cdot)$ as defined in Section 2 of \cite{Ball}. Let $\lambda_1,\lambda_2$ be two scalar meromorphic functions on $X$, which generate the whole field of meromorphic functions. Assume that all poles of $\lambda_1,\lambda_2$ are simple and labeled as $P_1,...,P_d \in X$. One can write the Laurent expansion of $\lambda_k$ at $P_i$ $(1\leq i \leq d, k=1,2)$ with some fixed local coordinate $t_i=t_i(P)$ centered at $P=P_i$
$$\lambda_k(P)=-\frac{c_{ik}}{t_i}-d_{ik}+O(|t_i|).$$
Here our notation $f=O(g)$ means $|f| \leq A |g|$ for some positive constant $A$. Then we define the $d\times d$ matrices $L,M,N$ by

$$L=\text{diag}_{1\leq i \leq d} (c_{i2}), \text{  } M=\text{diag}_{1\leq i \leq d} (-c_{i1}), \text{ } N=(n_{ij})_{i,j},$$

where
$$n_{ij}=\left\{
                     \begin{array}{ll}
                         d_{i1}c_{i2}-d_{i2}c_{i1}, & \hbox{$i=j$;} \\
        \displaystyle   (c_{i1}c_{j2}-c_{j1}c_{i2})\frac{K(\chi;P_i,P_j)}{dt_j(P_j)}, & \hbox{$i\neq j$.}
                         \end{array}
                     \right.
$$
The result mentioned in \cite{Ball} is the following:

\begin{prop}\label{proposition Ball Vinnikov}
The map $\pi_0 : X \rightarrow \mathbb{C}^2$ given by $\pi_0(P)=(\lambda_1(P),\lambda_2(P))$ maps $X \backslash \{P_1,...,P_d\}$ onto the affine part $C^0$ of an algebraic curve $C\subset \mathbb{P}^2$ and extends to a proper birational map $\pi: X \rightarrow C$ of $X$ in $\mathbb{P}^2$. The defining irreducible homogeneous polynomial $\phi(x,y,z)$ of $C$ is such that (up to multiplying by some constant) $$\phi(x,y,z)=\det(x L+y M+z N).$$
Here the affine part $C^0$ of $C$ is defined by the intersection of $C$ and the complement of the line $z=0$ at infinity.
\end{prop}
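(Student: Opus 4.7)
The plan is to split the claim into a geometric part (constructing $\pi$ and computing the degree of $C$) and an algebraic part (identifying $\phi$ with $\det(xL+yM+zN)$). For the geometric part, since $\lambda_1,\lambda_2$ generate the meromorphic function field $\mathbb{C}(X)$, the map $\pi_0$ is birational onto its image, so the Zariski closure $C\subset\mathbb{P}^2$ is an irreducible algebraic curve. The Laurent expansions $\lambda_k=-c_{ik}/t_i-d_{ik}+O(|t_i|)$ force the extension $\pi(P_i)=[c_{i1}:c_{i2}:0]$ onto the line at infinity, and $\pi$ is proper by compactness of $X$. To compute $\deg C$, pull back a generic line $\{ax+by+cz=0\}$: its preimage is the zero divisor of the meromorphic function $a\lambda_1+b\lambda_2+c$, whose pole divisor is $P_1+\cdots+P_d$, so $\deg C=d$ (using that $\pi$ is birational).

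For the algebraic part, set $D(x,y,z):=\det(xL+yM+zN)$, a homogeneous polynomial of degree at most $d$. Since $\phi$ is irreducible of degree $d$ and cuts out $C$, it suffices to show that $D$ is not identically zero and that $\det\bigl(\lambda_1(P)L+\lambda_2(P)M+N\bigr)=0$ for all $P\in X\setminus\{P_1,\dots,P_d\}$. I would exhibit an explicit nonzero null vector $v(P)\in\mathbb{C}^d$ with components $v(P)_j=K(\chi;P,P_j)/dt_j(P_j)$, choosing whatever normalisation turns the half-differentials into scalars. The hypothesis $h^0(\chi\otimes\mathcal{L})=0$ is exactly what makes $K(\chi;\cdot,\cdot)$ a well-defined reproducing kernel for sections of $\chi\otimes\mathcal{L}$, having only a prescribed simple pole on the diagonal.

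The core step is to verify $(\lambda_1(P)L+\lambda_2(P)M+N)v(P)=0$ row by row via residues. One applies the residue theorem to a carefully chosen meromorphic object on $X$, built from $K(\chi;\cdot,P_i)$ together with $\lambda_1,\lambda_2$, whose only poles lie at $P$ and the $P_j$: the contributions at the $P_j$ with $j\ne i$ reproduce the off-diagonal entries $(c_{i1}c_{j2}-c_{j1}c_{i2})K(\chi;P_i,P_j)v(P)_j/dt_j(P_j)$; the double-pole contribution at $P_i$ delivers the diagonal combination $\bigl(d_{i1}c_{i2}-d_{i2}c_{i1}+\lambda_1(P)c_{i2}-\lambda_2(P)c_{i1}\bigr)v(P)_i$ from the subleading Laurent data; and the residue at $P$ vanishes by the reproducing property of $K$. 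The matrices $L,M,N$ in the statement are reverse-engineered precisely so that these three groups of residues assemble into the $i$-th entry of $(\lambda_1(P)L+\lambda_2(P)M+N)v(P)$. The proposition then follows: $D$ vanishes on $C$, so $\phi\mid D$, and degree matching with $\deg D\leq d$ forces $D=\text{const}\cdot\phi$, the constant being nonzero because $v(P)\neq 0$ generically. The principal obstacle is this residue computation, which depends delicately on the fine local properties of the Cauchy kernel from \cite{Ball}, on the chosen local coordinates $t_i$, and on the sign conventions of the Laurent expansions of $\lambda_k$.
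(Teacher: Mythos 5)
The paper does not actually prove this proposition: it is imported verbatim from Ball--Vinnikov (Theorem 5.1 of \cite{Ball}, specialised from flat vector bundles to line bundles), and the text only states it and points to the source. So there is no internal argument to compare yours against; the relevant comparison is with the proof in \cite{Ball} itself, and your outline is a faithful reconstruction of that proof's architecture. The geometric half is fine and essentially complete: birationality of $\pi_0$ from the hypothesis that $\lambda_1,\lambda_2$ generate $\mathbb{C}(X)$, the extension to the $P_i$ landing at $[c_{i1}:c_{i2}:0]$, and $\deg C=d$ by identifying the pullback of a generic line with the zero divisor of $a\lambda_1+b\lambda_2+c$, whose degree equals that of the pole divisor $P_1+\cdots+P_d$.

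The algebraic half, however, is where all the content of the proposition sits, and there your proposal declares rather than performs the decisive step. The identity $(\lambda_1(P)L+\lambda_2(P)M+N)v(P)=0$ with $v(P)_j=K(\chi;P,P_j)/dt_j(P_j)$ is exactly the only place where the specific formulas for $L,M,N$, the normalisation of the Cauchy kernel, and the hypothesis $h^0(\chi\otimes\mathcal{L})=0$ enter; you correctly identify that it comes from summing residues of a meromorphic differential built from products of Cauchy kernels and the functions $c_{i2}\lambda_1-c_{i1}\lambda_2$ (which are regular at $P_i$ with value $-(d_{i1}c_{i2}-d_{i2}c_{i1})$), but you do not exhibit that differential or check the bookkeeping, and your assignment of which residue produces which group of terms is not obviously consistent (with the natural choice of integrand the pole at $P_i$ is simple, not double, and the residue at $P$ contributes the $\lambda_1(P)L+\lambda_2(P)M$ terms rather than vanishing). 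Two smaller points: the nonvanishing $D\not\equiv 0$ does not follow from $v(P)\neq 0$ (a null vector only shows $D$ vanishes on $C$); it follows instead from $\det(xL+yM)=\prod_i(c_{i2}x-c_{i1}y)\not\equiv 0$, since no $P_i$ can have $c_{i1}=c_{i2}=0$. And one should note that $v(P)$, while a section of a line bundle in $P$, is only needed pointwise after local trivialisation, so this causes no trouble. In short: the skeleton is the right one and matches the cited source, but the residue identity at its core is an acknowledged gap, not a proof.
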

The authors in \cite{Ball} prove a more general version of the above proposition where they consider $\chi$ to be any flat vector bundle. We restrict here to the case of line bundle since it is enough for our purpose.

Suppose in this case that $\chi$ is defined by a unitary representation of the fundamental group of $X$ given by
$$\chi(\alpha_i)=\exp(-2\pi i a_i)\text{ and }\chi(\beta_i)=\exp(2\pi i b_i), \text{ } i=1,...,g,$$
where $a_i,b_i\in \mathbb{R}$, $g$ is the genus of $X$ and $\alpha_1,...,\alpha_g,\beta_1,...,\beta_g$ form a symplectic basis of $H_1(X,\mathbb{Z})$. Let $(\eta_1,...,\eta_g)$ be a basis of holomorphic 1-forms on $X$, we form from these bases the period matrix $(\Omega_1 \mid \Omega_2)$ which is the $g\times 2g$-matrix whose entries are
$$(\Omega_1)_{ij}= \int_{\alpha_j}\eta_i \text{   and   } (\Omega_2)_{ij}=\int_{\beta_j} \eta_i, \text{  for  } i,j=1,...,g.$$
We choose the canonical basis $(\eta_1,...,\eta_g)$ of holomorphic 1-forms such that $\int_{\alpha_i}\eta_j =\delta_{ij}$, then the corresponding period matrix will be of the form $(I_g \mid \Omega)$. The matrix $\Omega$ lies in the Siegel upper half space $\mathbb{H}^g$ and it is called the Riemann period matrix of $X$ with respect to the homology basis $\alpha_1,...,\alpha_g,\beta_1,...,\beta_g$. We fix such a symplectic homology basis and the resulting period matrix $\Omega$. Let $J(X)=\mathbb{C}^g/(\mathbb{Z}^g+\Omega \mathbb{Z}^g)$ be the Jacobian of $X$ and $\varphi: X \rightarrow J(X)$ be the Abel-Jacobi map with any fixed base point. An explicit formula for the Cauchy kernel is given in \cite[Theorem 4.1]{Ball} as follows
\begin{equation}\label{Cauchy kernel}
K(\chi;P,Q)=\frac{\theta[\delta](\varphi(Q)-\varphi(P))}{\theta[\delta](0) E(Q,P)},
\end{equation}
where $\theta[\delta]$ is the associated theta function with characteristic $\delta=b+\Omega a=\varphi(\chi)$ $(a=(a_j)_j$ and $b=(b_j)_j)$ and $E(\cdot,\cdot)$ is the prime form on $X\times X$. Recall from \cite[Chapter II]{Fay} that the prime form $E$ is a bi-half-differential with simple zeros along the diagonal of $X\times X$.

Here the theta characteristic $\mathcal{L}$ is chosen such that $\varphi(\mathcal{L})=-\mathcal{K},$ where $\mathcal{K}$ is the vector of Riemann constants. As a consequence of the Riemann singularity theorem, $\theta(b+\Omega a) \neq 0$ if and only if $h^0(\chi \otimes \mathcal{L})=0$. Hence $\theta[\delta](0)\neq 0$ and the formula \eqref{Cauchy kernel} makes sense.

From Proposition \ref{proposition Ball Vinnikov}, one can explicitly provide determinantal representations for complex plane curves using theta functions and the Abel-Jacobi map. The reader can have a look at \cite[Section 4]{Hel Vin}, \cite[Theorem 6]{Plau} or \cite[Theorem 2.2]{Chien Naka} for reference. Note that results in the reference above only apply to the family of hyperbolic curves with a normalization. However, it can be written in the following general form:

\begin{prop}\label{theorem}
Let $C_{\phi}\subset \mathbb{P}^2$ be a non-rational irreducible complex plane curve defined by $\phi=0$, where $\phi(x,y,z)$ is an irreducible homogeneous polynomial of degree $d$. Suppose the $d$ intersection points of $C_{\phi}$ with the line $\{y=0\}$ are distinct non-singular points $P_1,...,P_d$ with coordinates $P_i=(1,0,\beta_i),$ $\beta_i\neq 0$. Then $$\phi(x,y,z)=\lambda\det(xM+yN+zI),$$
where $\lambda=\phi(0,0,1)$, $M=\emph{diag}(-\beta_1,...,-\beta_d)$ and $N=(n_{ij})_{i,j}$ with
$$n_{ii}=-\beta_i \frac{\phi_y(1,0,\beta_i)}{\phi_x(1,0,\beta_i)}$$
and for $i\neq j$
$$
n_{ij}=\frac{\beta_i - \beta_j}{\theta[\delta](0)}\cdot\frac{\theta[\delta](\varphi(P_j)-\varphi(P_i))}{E(P_j,P_i)}\cdot\frac{1}{\sqrt{d(-y/x)(P_i)}\sqrt{d(-y/x)(P_j)}}.
$$
Here $\delta$ is an even theta characteristic such that $\theta[\delta](0)\neq 0$, $\varphi: X \rightarrow J(X)$ is the Abel-Jacobi map from the desingularizing Riemann surface $X$ of $C_{\phi}$ to its Jacobian and $E(.,.)$ is the prime form on $X \times X$.
\end{prop}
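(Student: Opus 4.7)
The plan is to apply Proposition \ref{proposition Ball Vinnikov} to the desingularizing Riemann surface $X$ of $C_{\phi}$, choosing two meromorphic functions so that their common pole divisor is $P_1 + \cdots + P_d$ and the resulting Ball--Vinnikov matrices arrange themselves into the shape $xM + yN + zI$. The requirement that the coefficient of $z$ be the identity dictates the choice: the ``line at infinity'' in the Ball--Vinnikov sense must be $\{y=0\}$ rather than $\{z=0\}$, so I would set $\lambda_1 := x/y$ and $\lambda_2 := z/y$. These pull back to meromorphic functions on $X$ whose pole divisors are exactly $P_1 + \cdots + P_d$, with simple poles by the smoothness hypothesis, and generate the whole function field since $C_\phi$ is irreducible of degree $d \geq 2$.

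Next I would compute Laurent expansions at each $P_i = (1, 0, \beta_i)$. Assuming $\phi_z(P_i) \neq 0$ (if instead $\phi_y(P_i)\neq 0$, use $z - \beta_i$ as local coordinate and redo the computation symmetrically), the implicit function theorem applied to $\phi(1, y, z) = 0$ yields $z = \beta_i - (\phi_y(P_i)/\phi_z(P_i))\, y + O(y^2)$. Taking $t_i := y/x$ as local coordinate, I would read off the Ball--Vinnikov coefficients $c_{i1} = -1$, $d_{i1} = 0$, $c_{i2} = -\beta_i$, $d_{i2} = \phi_y(P_i)/\phi_z(P_i)$. Plugging into the recipe gives the diagonal matrices $L = \text{diag}(-\beta_i)$, $M = I$, and $n_{ii} = \phi_y(P_i)/\phi_z(P_i)$. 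Euler's identity at $P_i \in C_{\phi}$ forces $\phi_x(P_i) = -\beta_i\,\phi_z(P_i)$, so $n_{ii} = -\beta_i\,\phi_y(P_i)/\phi_x(P_i)$, matching the stated diagonal of $N$.

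For the off-diagonal entries, I would substitute the explicit Cauchy kernel formula \eqref{Cauchy kernel}: the factor $\theta[\delta](\varphi(P_j) - \varphi(P_i))/(\theta[\delta](0)\,E(P_j, P_i))$ emerges from $K(\chi; P_i, P_j)$, multiplied by $(c_{i1}c_{j2} - c_{j1}c_{i2}) = \beta_j - \beta_i$. The remaining local-coordinate factor $1/dt_j(P_j)$ has to be rewritten intrinsically: since $K(\chi; \cdot, \cdot)$ and $E(\cdot, \cdot)$ are bi-half-differentials while each $n_{ij}$ is a scalar, the natural formulation absorbs this factor into the bi-half-differential $1/(\sqrt{d(-y/x)(P_i)}\,\sqrt{d(-y/x)(P_j)})$ displayed in the statement. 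Swapping the roles of $y$ and $z$ (needed because our line at infinity is $\{y = 0\}$ rather than $\{z = 0\}$) then gives $\phi(x,y,z) = \lambda \det(xL + yN + zI)$ up to the scalar $\lambda$; setting $x = y = 0$, $z = 1$ and using $\det(I) = 1$ identifies $\lambda = \phi(0,0,1)$.

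The main obstacle is the bookkeeping between Ball--Vinnikov's conventions (line at infinity $\{z = 0\}$, specific signs in the Laurent expansion, local-coordinate-dependent form of $N$) and the intrinsic half-differential form used in the statement (normalization via $\sqrt{d(-y/x)}$, sign $\beta_i - \beta_j$). The underlying computation is routine, but tracking signs and converting the scalar-valued Ball--Vinnikov output into the coordinate-independent form of $n_{ij}$ is where the real care is needed.
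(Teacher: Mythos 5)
Your proposal is correct and follows essentially the same route as the paper: the paper obtains Proposition \ref{theorem} by specializing Theorem \ref{theorem det rep of complex curves} to $\alpha=\gamma=0$, whose proof applies Proposition \ref{proposition Ball Vinnikov} to the pair $z/y$, $x/y$ with local coordinate $-y/x$ and then passes from $1/dt_j(P_j)$ to the symmetric normalization $1/(\sqrt{dt(P_i)}\sqrt{dt(P_j)})$ --- exactly your computation up to the ordering of $\lambda_1,\lambda_2$ and the branch/sign ambiguities of the square roots, which the paper notes do not affect the determinant. The one step you only gesture at, namely that this symmetrization leaves $\det(xM+yN+zI)$ unchanged, is done in the paper by an explicit determinant-preserving rescaling of rows and columns, but your identification of it as the required bookkeeping is accurate.
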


Note that the matrix $N$ in the above proposition depends on selections of branches of $\sqrt{d(-y/x)(P_i)}$. However, the determinant $\det(xM + yN + zI)$ is independent of the selections.

We want to generalize Proposition \ref{theorem} in such a way that the line $\{y=0\}$ is replaced by a general line passing through distinct points of $C_{\phi}$.

Let $l$ be a line defined by $\alpha x +\beta y +\gamma z=0$ so that its affine part $l^0$ defined by $ \alpha x + \beta y +\gamma =0$ intersects the affine part $C_{\phi}^0$ of $C_{\phi}$ at $d$ distinct non-singular points $P_i^0$, $i=1,...,d$. Since $\alpha$ and $\beta$ can not be both zero, we can suppose w.l.o.g that $\beta\neq 0$ (the case $\alpha\neq 0$ can be treated similarly). In this case, we can suppose further that $\beta=-1$. Therefore, the line $l$ can be rewritten as $y=\alpha x+\gamma z$. Assume that the intersections points $P_i^0$ of $l^0$ and $C_{\phi}^0$ have non-zero $x$-coordinates so that we can write $P_i^0=(1/\beta_i,\alpha/ \beta_i+\gamma)$ with $\beta_i\neq \beta_j$ if $i\neq j$. Thus the intersection points of $l$ and $C_{\phi}$ are $P_i=(1,\alpha+\gamma\beta_i,\beta_i)$. We now prove the following:

\begin{thm}\label{theorem det rep of complex curves}
Let $C_{\phi}\subset \mathbb{P}^2$ be a non-rational irreducible complex plane curve defined by $\phi=0$, where $\phi(x,y,z)$ is an irreducible homogeneous polynomial of degree $d$. Suppose the $d$ intersection points of $C_{\phi}$ with the line $\{y=\alpha x+\gamma z\}$ are distinct non-singular points $P_1,...,P_d$ with coordinates $P_i=(1,\alpha +\gamma\beta_i,\beta_i),$ $\beta_i\neq 0$. Then up to multiplying by some constant
$$\phi(x,y,z)=\det((M-\alpha N)x+Ny+(I-\gamma N)z),$$
where $M=\emph{diag}(-\beta_1,...,-\beta_d)$ and $N=(n_{ij})_{i,j}$ with
$$n_{ii}=-\frac{\beta_i \phi_y(P_i)}{(\phi_x+\alpha \phi_y)(P_i)}$$
and for $i\neq j$
$$
n_{ij}=\frac{\theta[\delta](\varphi(P_j)-\varphi(P_i))}{\theta[\delta](0)E(P_j,P_i)}\cdot\frac{\beta_i-\beta_j}{\sqrt{\beta_i(\alpha dx - dy)(P_i)}\sqrt{\beta_j(\alpha dx - dy)(P_j)}}.
$$
Here $\delta$ is an even theta characteristic such that $\theta[\delta](0)\neq 0$, $\varphi: X \rightarrow J(X)$ is the Abel-Jacobi map from the desingularizing Riemann surface $X$ of $C_{\phi}$ to its Jacobian and $E(.,.)$ is the prime form on $X \times X$.
\end{thm}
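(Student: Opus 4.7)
The strategy is to reduce to Proposition \ref{theorem} by the projective linear change of coordinates that sends the line $\{y=\alpha x+\gamma z\}$ to $\{y'=0\}$. Concretely, I will set $x'=x$, $y'=y-\alpha x-\gamma z$, $z'=z$ and define
$$\tilde\phi(x',y',z'):=\phi(x',\,y'+\alpha x'+\gamma z',\,z').$$
Since the substitution is a linear projective automorphism, $C_{\tilde\phi}$ is an irreducible non-rational plane curve of degree $d$, canonically isomorphic to $C_\phi$; in particular the two curves share a common desingularizing Riemann surface $X$, and under this identification the basis of holomorphic 1-forms, the period matrix, the Abel-Jacobi map $\varphi$, the theta characteristic $\delta$, the theta function $\theta[\delta]$ and the prime form $E$ all transfer verbatim. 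The images $\tilde P_i=(1,0,\beta_i)$ of the $P_i$ are distinct, non-singular points of $C_{\tilde\phi}$ with $\beta_i\neq 0$, so they satisfy the hypotheses of Proposition \ref{theorem}.

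Applying that proposition to $\tilde\phi$ produces a representation
$$\tilde\phi(x',y',z')=\lambda\,\det(x'M+y'\tilde N+z'I)$$
for some non-zero constant $\lambda$, with $M=\operatorname{diag}(-\beta_1,\ldots,-\beta_d)$ and an explicit matrix $\tilde N=(\tilde n_{ij})$. Substituting $y'=y-\alpha x-\gamma z$ (and $x'=x$, $z'=z$) on both sides rearranges this identity into
$$\phi(x,y,z)=\lambda\,\det\bigl((M-\alpha\tilde N)\,x+\tilde N\,y+(I-\gamma\tilde N)\,z\bigr),$$
which has the shape claimed in the theorem upon setting $N:=\tilde N$.

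It then remains to verify that the entries of $N=\tilde N$ coincide with the formulas in the statement. For the diagonal, the chain rule applied to $\tilde\phi$ gives $\tilde\phi_{y'}(\tilde P_i)=\phi_y(P_i)$ and $\tilde\phi_{x'}(\tilde P_i)=(\phi_x+\alpha\phi_y)(P_i)$, so the expression $\tilde n_{ii}=-\beta_i\tilde\phi_{y'}(\tilde P_i)/\tilde\phi_{x'}(\tilde P_i)$ supplied by Proposition \ref{theorem} specialises at once to the claimed $n_{ii}$. For the off-diagonal entries the Abel-Jacobi, theta function and prime form pieces transfer directly under the identification of Riemann surfaces, so the only non-trivial work is to match the half-differential factor: one must show that $\sqrt{d(-y'/x')(\tilde P_i)}$ agrees, up to branch, with $\sqrt{\beta_i(\alpha\,dx-dy)(P_i)}$. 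This is carried out by a local expansion at a preimage $Q_i\in X$ of $P_i$, using a common local parameter together with the identity $y'=y-\alpha x-\gamma z$; the factor $\beta_i$ enters when one rescales between the trivialization of the tangent space natural at $\tilde P_i$ in the chart $x'=1$ and the one used to express the half-differential at $P_i$ on $C_\phi$.

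The main obstacle is precisely this half-differential bookkeeping: branch choices are involved, and one must correctly track how the square-root of the 1-form behaves under the affine rescaling that accompanies the coordinate change. By the remark following Proposition \ref{theorem}, however, the resulting determinant is independent of the branch selections, so it suffices to match the formulas up to branch, which makes the identification robust once the local expansion has been carried out.
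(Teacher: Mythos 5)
Your proof runs in the opposite logical direction from the paper, and within the paper's own development it is circular. You reduce Theorem \ref{theorem det rep of complex curves} to Proposition \ref{theorem} by the linear substitution $y'=y-\alpha x-\gamma z$; but the paper never proves Proposition \ref{theorem} independently. It is stated with pointers to the literature, together with the explicit caveat that those references ``only apply to the family of hyperbolic curves with a normalization,'' and immediately after the proof of the theorem the paper writes that ``Proposition \ref{theorem} is then established by reducing to the case $\alpha=\gamma=0$.'' So in this paper the proposition is a corollary of the theorem, not a usable input for it. The actual content of the paper's proof — which your proposal omits entirely — is the application of Proposition \ref{proposition Ball Vinnikov}: one chooses the meromorphic functions $\lambda_1=1/(y-\alpha x-\gamma)$ and $\lambda_2=x/(y-\alpha x-\gamma)$ on the normalization $X$, takes the local coordinate $t=(\alpha x-y+\gamma)/x$ at the poles $P_i$, computes the Laurent coefficients $c_{i1}=\beta_i$, $d_{i1}=\beta_i\,\frac{dx}{d(y-\alpha x)}(P_i)$, $c_{i2}=1$, $d_{i2}=0$, assembles $L,M,N$, substitutes the explicit Cauchy kernel \eqref{Cauchy kernel}, and then symmetrizes the off-diagonal entries by the row/column rescaling with $\sqrt{d(\tfrac{\alpha x-y+\gamma}{x})(P_i)}$. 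None of that work is done or replaced by anything in your argument.

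That said, the reduction you describe is itself correct and is exactly the inverse of the reduction the paper uses to deduce Proposition \ref{theorem} from the theorem: the chain-rule identities $\tilde\phi_{y'}=\phi_y$ and $\tilde\phi_{x'}=\phi_x+\alpha\phi_y$ give the right diagonal entries, and the identity $d\bigl(\tfrac{\alpha x-y+\gamma}{x}\bigr)(P_i)=\beta_i(\alpha\,dx-dy)(P_i)$ (an immediate quotient-rule computation using that $\alpha x-y+\gamma$ vanishes at $P_i$, which you gesture at but do not carry out) matches the half-differential factors. To turn your proposal into a complete proof you would need to first prove Proposition \ref{theorem} for general non-rational irreducible curves directly from Proposition \ref{proposition Ball Vinnikov} with $\lambda_1=1/y$, $\lambda_2=x/y$ — which is the same Laurent-expansion and symmetrization argument the paper performs in the general-line setting — at which point your coordinate change would legitimately deliver the theorem.
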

\begin{proof}
Apply Proposition \ref{proposition Ball Vinnikov} with the pair of meromorphic functions on the desingularizing Riemann surface $X$ of $C_{\phi}$:
$$\lambda_1=\frac{1}{y - \alpha x - \gamma}, \text{ }\lambda_2=\frac{x}{y - \alpha x - \gamma}$$
and the local coordinates $t=\frac{\alpha x-y+\gamma}{x}$ at the poles $P_i$ (zeros of $\alpha x -y+\gamma$). The next step is to write down Laurent expansions of $\lambda_1,\lambda_2$ at $P_i$. We have
$$\lambda_2=-1/t \Rightarrow c_{i2}=1, \text{ } d_{i2}=0 \text{ } \forall i.$$
At $P_i$ we have $\lambda_1=-\frac{1}{t}(\frac{1}{x})$. Since
$$\frac{1}{x}=\beta_i+\frac{d(\frac{1}{x})}{d(\frac{\alpha x-y+\gamma}{x})}(P_i)t+O(|t|^2)=\beta_i+\beta_i\frac{dx}{d(y - \alpha x)}(P_i) t+O(|t|^2),$$
one deduces that
$$c_{i1}=\beta_i, \text{ }d_{i1}=\beta_i\frac{dx}{d(y - \alpha x)}(P_i).$$
We then obtain from Proposition \ref{proposition Ball Vinnikov} that (up to some constant)
$$\phi=\det((M-\alpha N)x+Ny+(I-\gamma N)z),$$
where $M=\text{diag}(-\beta_1,...,-\beta_d)$ and $N=(n_{ij})$ with
$$n_{ii}=\beta_i\frac{dx}{d(y - \alpha x)}(P_i)$$
and for $i\neq j$
$$n_{ij}=\frac{\theta[\delta](\varphi(P_j)-\varphi(P_i))}{\theta[\delta](0)E(P_j,P_i)}\cdot\frac{\beta_i-\beta_j}{d(\frac{\alpha x - y + \gamma}{x})(P_j)}.$$
Here $\delta$ is an even theta characteristic with $\theta[\delta](0)\neq 0$. Note that the affine part $C_{\phi}^0$ of $C_{\phi}$ is defined by
$$\{(\lambda_1(P),\lambda_2(P)) \mid P\in X \setminus \{P_1,...,P_d\}\}.$$
Furthermore, if we replace $N$ by the matrix $N'$ which has the same diagonal elements as $N$ but different off-diagonal elements
$$
n_{ij}'=\frac{\theta[\delta](\varphi(P_j)-\varphi(P_i))}{\theta[\delta](0)E(P_j,P_i)}\cdot\frac{\beta_i-\beta_j}{\sqrt{d(\frac{\alpha x - y + \gamma }{x})(P_i)}\sqrt{d(\frac{\alpha x - y + \gamma}{x})(P_j)}},
$$
then the determinantal representation does not change. Indeed, let
$$U=(M-\alpha N)x+Ny+(I-\gamma N)z$$
and
$$U'=(M-\alpha N')x+N'y+(I-\gamma N')z,$$
if we multiply the $i^{th}$-column of $U$ and the $i^{th}$-row of $U'$ (for $i=1,...,d$) with the term $\sqrt{d(\frac{\alpha x - y + \gamma}{x})(P_i)}$ then both of them will become the same matrix $U^*$. Consequently,
$$\det(U)=\det(U')=\frac{\det(U^*)}{\prod_{i=1}^d \sqrt{d(\frac{\alpha x - y + \gamma}{x})(P_i)}}.$$
Observe that $d(\frac{\alpha x - y + \gamma}{x})(P_i)=\beta_i(\alpha dx - dy)(P_i)$ and
$$\frac{dx}{d(y - \alpha x)}(P_i)=-\frac{\phi_y(P_i)}{(\phi_x+\alpha \phi_y)(P_i)}$$
by implicit function theorem with the fact that $(\phi_x+\alpha \phi_y)(P_i)\neq 0$. Indeed, since the polynomial $f(x):=\phi(x,\alpha x + \gamma, 1)$ has distinct roots $(1/\beta_i)$ we conclude that $f'(1/\beta_i)\neq 0$ and hence $(\phi_x+\alpha \phi_y)(P_i)\neq 0$. This completes the proof of Theorem \ref{theorem det rep of complex curves}.
\end{proof}
Proposition \ref{theorem} is then established by reducing to the case $\alpha=\gamma=0$. Theorem \ref{theorem det rep of complex curves} will be applied in the next section to get a formula for the discriminant of plane cubic curves.

\begin{rem}\label{remark alternative lines l}
One can also reformulate the analogous statement to the Theorem \ref{theorem det rep of complex curves} if the line $y=\alpha x + \gamma z$ is replaced by $x=\alpha y + \gamma z$.
\end{rem}

\section{Discriminants of plane cubic curves}\label{section disc of plane cubic curves}

We now study the main object of interest in which we consider a smooth plane curve $C_{\phi}$ over $\mathbb{C}$ defined by the cubic form $\phi$. The affine part of the curve $C_{\phi}$ is parametrized as
$$\{(x,y,1)=(R_1(\mathcal{P}(s),\mathcal{P}'(s)),R_2(\mathcal{P}(s),\mathcal{P}'(s)),1)\},$$
where $\mathcal{P}(s;\omega_1,\omega_2)$ is the Weierstrass $\mathcal{P}$-function associated to some $\omega_1,\omega_2\in \mathbb{C}$ satisfying Im$(\omega_2/\omega_1)>0$. The field of meromorphic functions on a general genus one curve is generated by $\mathcal{P}, \mathcal{P}'$ associated to some periods $\omega_1,\omega_2$. Thus $R_1$ and $R_2$ are rational functions on $\mathcal{P}, \mathcal{P}'$.

In this section, we use the standard notation $\tau$ of genus one case instead of $\Omega$ for the period matrix. Moreover, we use the general Jacobian $\mathbb{C}/(\omega_1 \mathbb{Z} + \omega_2 \mathbb{Z})$ in place of the normalized one $\mathbb{C}/(\mathbb{Z}+\tau \mathbb{Z})$ for $\tau=\omega_2/\omega_1$ in order to use the properties of the function $\mathcal{P}$. By this change, an extra factor $1/\omega_1$ appears in the below elements $n_{ij}$ $(i\neq j)$ in comparing with Theorem \ref{theorem det rep of complex curves}. This idea was mentioned in \cite[Theorem 2.4]{Chien Naka}. In addition, we use the notation $\theta_{\delta}$ $(\delta=1,2,3,4)$ for theta functions as in Section \ref{section det rep of elliptic curves} instead of $\theta[\delta]$.

The prime form $E(P,Q)$ in genus one case is better understood so that a consequence of Theorem \ref{theorem det rep of complex curves} is obtained as follows:
\begin{cor}\label{corollary det rep of plane cubic}
Let $C_{\phi}\subset \mathbb{P}^2$ be a smooth plane cubic curve defined by $\phi=0$, where $\phi(x,y,z)$ is a non-singular homogeneous cubic polynomial. Suppose the line $y=\alpha x+\gamma z$ intersects $C_{\phi}$ at 3 distinct points $P_1,P_2,P_3$ with coordinates $P_i=(1,\alpha +\gamma\beta_i,\beta_i),$ $\beta_i\neq 0$. Then up to multiplying by some constant
\begin{equation}\label{determinant}
\phi(x,y,z)=\det((M-\alpha N)x+Ny+(I-\gamma N)z),
\end{equation}
where $M=\emph{diag}(-\beta_1,-\beta_2,-\beta_3)$ and $N=(n_{ij})_{i,j}$ with
$$n_{ii}=-\frac{\beta_i \phi_y(P_i)}{(\phi_x+\alpha \phi_y)(P_i)}$$
and for $i\neq j$
$$
n_{ij}=\frac{\theta_1'(0)\theta_{\delta}((Q_j-Q_i)/\omega_1)}{\omega_1\theta_{\delta}(0)\theta_1((Q_j-Q_i)/\omega_1)}\cdot\frac{\beta_i-\beta_j}{\sqrt{\beta_i(\alpha R_1' - R_2')(Q_i)}\sqrt{\beta_j(\alpha R_1' - R_2')(Q_j)}}
$$
Here $\delta$ is any even theta characteristic, i.e., $\delta=2,3$ or $4$ and $Q_i=\varphi(P_i)$. Note that we also have an analogous statement of this corollary by Remark \ref{remark alternative lines l}.
\end{cor}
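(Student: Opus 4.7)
The plan is to specialize Theorem~\ref{theorem det rep of complex curves} to the case $d=3$ where the desingularizing Riemann surface $X$ of $C_\phi$ has genus one, and then to make the Abel--Jacobi map, the prime form, and the half-differential factors explicit using classical Jacobi theta functions and the Weierstrass parametrization.

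First I would apply Theorem~\ref{theorem det rep of complex curves} directly with $d=3$. This already gives the determinantal representation \eqref{determinant}, the diagonal entries $n_{ii}$ in the asserted form, and the off-diagonal entries in the shape
$$n_{ij}=\frac{\theta[\delta](\varphi(P_j)-\varphi(P_i))}{\theta[\delta](0)\,E(P_j,P_i)}\cdot\frac{\beta_i-\beta_j}{\sqrt{\beta_i(\alpha dx - dy)(P_i)}\sqrt{\beta_j(\alpha dx - dy)(P_j)}}.$$
So only the first factor and the differential factors need to be rewritten.

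Next I would use the genus-one closed form for the prime form. On $\mathbb{C}/(\mathbb{Z}+\tau\mathbb{Z})$ with the normalized holomorphic coordinate $u$ and canonical $1$-form $du$, one has the classical identity
$$E(u,v)\,\sqrt{du}\,\sqrt{dv}=\frac{\theta_1(u-v,\tau)}{\theta_1'(0,\tau)}.$$
Because the corollary uses the non-normalized Jacobian $\mathbb{C}/(\omega_1\mathbb{Z}+\omega_2\mathbb{Z})$, I would switch to the coordinate $s=\omega_1 u$, which is the natural parameter for the Weierstrass parametrization $(x,y)=(R_1(\mathcal{P}(s),\mathcal{P}'(s)),R_2(\mathcal{P}(s),\mathcal{P}'(s)))$. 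Under this change, $\varphi(P_i)=Q_i=\omega_1\,u_i$, so the normalized theta arguments $u_j-u_i$ become $(Q_j-Q_i)/\omega_1$; this produces the ratio $\theta_1'(0)\theta_\delta((Q_j-Q_i)/\omega_1)/[\theta_\delta(0)\theta_1((Q_j-Q_i)/\omega_1)]$ appearing in the statement.

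For the differential factors, $dx=R_1'(s)\,ds$ and $dy=R_2'(s)\,ds$ give
$(\alpha dx-dy)(P_i)=(\alpha R_1'-R_2')(Q_i)\,ds|_{P_i}$, while the change $u=s/\omega_1$ forces $\sqrt{du}=\sqrt{ds}/\sqrt{\omega_1}$. Collecting the two half-differentials $\sqrt{du}|_{P_i}\sqrt{du}|_{P_j}$ implicit in the prime-form identity with the two factors $\sqrt{\beta_i(\alpha dx-dy)(P_i)}\sqrt{\beta_j(\alpha dx-dy)(P_j)}$ in the denominator yields a net prefactor $1/\omega_1$ and replaces $(\alpha dx-dy)(P_i)$ by the scalar $(\alpha R_1'-R_2')(Q_i)$. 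Assembling these gives exactly the formula stated for $n_{ij}$. The validity of choosing $\delta\in\{2,3,4\}$ is automatic since the three even theta constants $\theta_\delta(0,\tau)$ are all non-zero, so the hypothesis $\theta[\delta](0)\neq 0$ of Theorem~\ref{theorem det rep of complex curves} is satisfied.

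The main obstacle is the bookkeeping of half-differentials: the prime form $E(P,Q)$ is a bi-half-differential on $X\times X$ and the square roots in Theorem~\ref{theorem det rep of complex curves} must be interpreted so that the final ratio is a genuine scalar. Verifying that the $\omega_1$-weights match so that exactly one $1/\omega_1$ is produced (rather than $1/\sqrt{\omega_1}$ or $1/\omega_1^2$) is the subtle point, and it is precisely what the text flags in the introductory paragraph of this section when it notes that an extra factor $1/\omega_1$ appears as compared to the normalized statement of Theorem~\ref{theorem det rep of complex curves}.
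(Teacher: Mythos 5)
Your proposal is correct and takes essentially the same route as the paper: the paper also obtains the corollary by specializing Theorem~\ref{theorem det rep of complex curves} to the genus-one case, inserting the explicit genus-one prime form and passing from the normalized Jacobian to $\mathbb{C}/(\omega_1\mathbb{Z}+\omega_2\mathbb{Z})$, which is exactly the source of the extra factor $1/\omega_1$ in the off-diagonal entries. If anything, your write-up makes the half-differential bookkeeping more explicit than the paper, which states the corollary as a direct consequence and only remarks on the $1/\omega_1$ rescaling.
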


In general, the rational functions $R_1$ and $R_2$ have complicated expressions. However, we have better interpretations in the case of plane cubic curves. In this case, $C_{\phi}$ always has a flex point and hence can be transformed to a Weierstrass form after a linear coordinate change (see \cite[Section 4.4]{Cremona transform}). Thus, we are able to present rational functions $R_1,R_2$ as:
$$R_1(s)=\lambda_{11}\mathcal{P}(s)+\lambda_{12}\mathcal{P}'(s)+\lambda_{13},$$
\begin{equation}\label{R1 R2}
R_2(s)=\lambda_{21}\mathcal{P}(s)+\lambda_{22}\mathcal{P}'(s)+\lambda_{23}.
\end{equation}
The constants $\lambda_{ij}\in \mathbb{C}$ satisfy $\lambda_{11}\lambda_{22}\neq \lambda_{12}\lambda_{21}$ and depend on the coefficients of $\phi$. Here we fix any flex point and the corresponding periods $\omega_1,\omega_2$ coming from the Weierstrass parametrization of the Weierstrass form.

To shorten the determinantal representation, one should look at 2-torsion points to simplify $\theta$ and $\mathcal{P}$. More precisely, we consider the line $l$ which intersects $C_{\phi}$ at the points $P_i$ such that the corresponding points $Q_i$ on the torus $\mathbb{C}/(\omega_1\mathbb{Z}+\omega_2\mathbb{Z})$ are $\omega_1/2, (\omega_1+\omega_2)/2$ and $\omega_2/2$ respectively. Suppose that the $x$-coordinates of $P_i$ are all non-zero. We will treat the case $l$ to have the form $y=\alpha x + \gamma z$ and then make use of Corollary \ref{corollary det rep of plane cubic}. The other case can be treated similarly using Remark \ref{remark alternative lines l}. The choice of 2-torsion points gives us the convenience to work with some computations below. Let $a=\theta_2(0,\tau), b=\theta_3(0,\tau), c=\theta_4(0,\tau)$, where $\tau=\omega_2/\omega_1$, we will prove the following:

\begin{prop}\label{proposition disc of plane cubic}
Let $C_{\phi}\subset \mathbb{P}^2$ be a smooth plane curve defined by $\phi=0$, where $\phi(x,y,z)$ is a non-singular homogeneous cubic polynomial. Suppose the line $y=\alpha x+\gamma z$ intersects $C_{\phi}$ at 3 distinct points $P_1,P_2,P_3$ with coordinates $P_i=(1,\alpha +\gamma\beta_i,\beta_i),$ $\beta_i\neq 0$ so that the corresponding points $Q_i=\varphi(P_i)$ of $P_i$ on the torus $\mathbb{C}/(\omega_1 \mathbb{Z} +\omega_2 \mathbb{Z})$ are $\omega_1/2, (\omega_1+\omega_2)/2$ and $\omega_2/2$ respectively. Denote by $k=\alpha\lambda_{12} - \lambda_{22}$, then we have the following expressions (up to some constant) for the discriminant $\Delta_{\phi}$ of $\phi$
$$\Delta_{\phi}=\frac{\lambda_{11}^6 \omega_1^{24}}{2^8 k^{12} \pi^{24} (abc)^{16}}(\beta_1-\beta_2)^6(\beta_1-\beta_3)^6(\beta_2-\beta_3)^6$$
and
$$\Delta_{\phi}=16\left(\frac{\lambda_{11}^2 \pi \beta_1\beta_2\beta_3}{2 k \omega_1}\right)^{12} (abc)^8.$$
\end{prop}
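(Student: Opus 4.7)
The proof will proceed as a direct computation combining the determinantal representation of Corollary~\ref{corollary det rep of plane cubic} with the simplifications afforded by the 2-torsion hypothesis. The first step is to write $\phi = \lambda\det(U)$ with $U = (M - \alpha N)x + Ny + (I - \gamma N)z$; the normalizing constant $\lambda$ can be fixed by observing that $U(0,y,1) = I + (y-\gamma)N$, whence evaluating at $y = \gamma$ gives $\lambda = \phi(0,\gamma,1)$. A convenient reformulation is
$$U \;=\; \mathrm{diag}(z - \beta_i x) \;+\; (y - \alpha x - \gamma z)\,N,$$
which makes the restriction to the line $y = \alpha x + \gamma z$ transparent: we recover $\phi(1,\alpha+\gamma z,z) = \lambda\prod_i(z-\beta_i)$.

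The second step is to evaluate the entries of $N$ explicitly under the 2-torsion choice $Q_i \in \{\omega_1/2,(\omega_1+\omega_2)/2,\omega_2/2\}$. Two simplifications intervene. On the diagonal, since $\mathcal{P}'$ vanishes at 2-torsion points, differentiating \eqref{R1 R2} gives $R_k'(Q_i) = \lambda_{k2}\,\mathcal{P}''(Q_i)$, so $(\alpha R_1' - R_2')(Q_i) = k\,\mathcal{P}''(Q_i)$; the values $\mathcal{P}''(Q_i)$ are writable in $a,b,c,\omega_1$ via $\mathcal{P}'' = 6\mathcal{P}^2 - g_2/2$ and the explicit values of $\mathcal{P}$ at half-periods recalled in Section~\ref{section det rep of elliptic curves}. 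Off the diagonal, each difference $Q_j - Q_i$ is itself a half-period, so the theta ratios $\theta_\delta((Q_j-Q_i)/\omega_1)/\theta_1((Q_j-Q_i)/\omega_1)$ collapse to ratios of the three even theta constants via standard identities such as $\theta_1(z+1/2) = \theta_2(z)$, $\theta_1(z+\tau/2) = \text{const}\cdot\theta_4(z)$, together with Jacobi's derivative formula $\theta_1'(0) = \pi abc$.

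With explicit entries in hand, the third step computes the discriminant from \eqref{disc res}, namely $\Delta_\phi = -\text{Res}(\phi_x,\phi_y,\phi_z)/27$. Setting $u = y-\alpha x-\gamma z$ and expanding $\det(U) = \prod(z-\beta_i x) + u\cdot(\cdots) + u^2(\cdots) + u^3\det(N)$ isolates the coefficients of $\phi$ as polynomials in the $n_{ii}, n_{ij}, \beta_i$. The Vandermonde $\prod_{i<j}(\beta_i-\beta_j)^6$ arises naturally because the discriminant of the univariate cubic $\phi(1,\alpha+\gamma z,z) = \lambda\prod(z-\beta_i)$, namely $\lambda^4\prod(\beta_i-\beta_j)^2$, enters the computation of the ternary discriminant (compatible with its degree $12$ homogeneity), producing the first formula after substituting the theta expressions for $n_{ii}$ and $n_{ij}$.

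The second formula should then follow either by a parallel evaluation using $\det(N)$ in place of the Vandermonde, or from the first via the identity
$$\omega_1^6\prod_{i<j}(\beta_i-\beta_j) \;=\; \pm\,\lambda_{11}^3\,\pi^6(\beta_1\beta_2\beta_3)^2(abc)^4,$$
which is forced by the 2-torsion constraint and is essentially a Jacobi-type identity in $a,b,c$. The main obstacle, in my view, is the bookkeeping of step two: there are three pairs $(i,j)$ and three even characteristics, and the square-root normalizations $\sqrt{\beta_i(\alpha dx - dy)(P_i)}$ require careful phase tracking so that the matrix represents a polynomial with the expected invariants. Reconciling the two resulting formulas amounts to the theta-constant identity above and serves as an internal consistency check.
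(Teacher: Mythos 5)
Your overall route coincides with the paper's: apply Corollary \ref{corollary det rep of plane cubic}, use the 2-torsion hypothesis to evaluate the entries of $N$ through $\mathcal{P}'(Q_i)=0$, the half-period values of $\mathcal{P}$ and $\mathcal{P}''$, and theta identities at half-periods, then feed the resulting explicit cubic into $\Delta_{\phi}=-\mathrm{Res}(\phi_x,\phi_y,\phi_z)/27$. Your passage from the first formula to the second is exactly the paper's: multiplying the three relations $(\beta_i-\beta_j)/(\beta_i\beta_j)=\lambda_{11}(\mathcal{P}(Q_j)-\mathcal{P}(Q_i))$ of \eqref{beta} gives precisely your identity $\omega_1^6\prod_{i<j}(\beta_i-\beta_j)=-\lambda_{11}^3\pi^6(\beta_1\beta_2\beta_3)^2(abc)^4$, and substituting its sixth power into the first formula reproduces the second, as a direct check confirms.

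The gap is in your third step. You assert that the factor $\prod_{i<j}(\beta_i-\beta_j)^6$ ``arises naturally'' because the discriminant of the univariate restriction $\lambda\prod(z-\beta_i)$ enters the ternary discriminant. There is no such general mechanism: the discriminant of a ternary cubic is not a function of the binary discriminant of its restriction to a line, and the explicit resultant of the partials of the expanded cubic \eqref{F1} is in fact \emph{not} symmetric in the $\beta_i$ a priori --- it equals $-432\,(\beta_1-\beta_2)^2(\beta_2-\beta_3)^2(\beta_1-\beta_3)^6\, n_{12}^4 n_{23}^4\left((\beta_2-\beta_3)n_{12}^2-(\beta_1-\beta_2)n_{23}^2\right)^2$, with $(\beta_1-\beta_3)$ appearing to the sixth power but the other two differences only squared. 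The symmetric sixth-power form emerges only after substituting the theta-constant expressions for $n_{12}^2$ and $n_{23}^2$ and invoking \eqref{beta} once more. Relatedly, you omit the key simplification that with the choice $\delta=3$ one has $n_{13}=n_{31}=0$ (since $\theta_3((1+\tau)/2)=0$); this vanishing is what makes \eqref{F1} and its resultant tractable and is responsible for the asymmetric shape above. Without it, your expansion of $\det(U)$ in powers of $u=y-\alpha x-\gamma z$ carries all three off-diagonal products and the computation does not close in the form you describe. So the skeleton is correct and matches the paper, but the step that actually produces the first displayed formula is missing, and the heuristic offered in its place would not survive the computation.
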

\begin{proof}
By \cite[p. 470, 509]{Wang}, we have $\mathcal{P}'(Q_i)= 0$ for all $i$ and
$$\mathcal{P}(Q_1)=\frac{\pi^2}{3\omega_1^2}(b^4+c^4), \text{ }\mathcal{P}(Q_2)=\frac{\pi^2}{3\omega_1^2}(a^4-c^4),\text{ } \mathcal{P}(Q_3)=-\frac{\pi^2}{3\omega_1^2}(a^4+b^4).$$
Besides, $\mathcal{P}''(s)=6(\mathcal{P}(s))^2- g_2/2$ with $g_2=\frac{2}{3}(\frac{\pi}{\omega_1})^4 (a^8+b^8+c^8)$ as in \cite[p. 469]{Wang}. Thus
$$\mathcal{P}''(Q_1)=\frac{2\pi^4b^4c^4}{\omega_1^4}, \text{ }\mathcal{P}''(Q_2)=-\frac{2\pi^4a^4c^4}{\omega_1^4}, \text{ }\mathcal{P}''(Q_3)=\frac{2\pi^4a^4b^4}{\omega_1^4}.$$
We also have for each $i$
$$-\frac{\phi_y(P_i)}{(\phi_x+\alpha \phi_y)(P_i)}=\frac{dx}{d(y - \alpha x)}(P_i)=\frac{R_1'}{(R_2' - \alpha R_1')}(Q_i)=\frac{\lambda_{12}}{\lambda_{22} - \alpha \lambda_{12}}.$$
Now, choose $\delta=3$ to simplify the matrix $N$ in Corollary \ref{corollary det rep of plane cubic}. Let $k_1=-\lambda_{12}/k$ and note that $\theta_1'(0)=\pi abc$ as in \cite[p. 507]{Wang}, we have $n_{ii}=k_1\beta_i$ and $n_{13}=n_{31}=0$ as $\theta((1+\tau)/2)=0$. Moreover,
$$n_{12}^2=n_{21}^2=\frac{\pi^2 (abc)^2 \theta^2\left(\frac{\tau}{2}\right) (\beta_1 - \beta_2)^2}{\omega_1^2 k^2 b^2 \theta_1^2 \left(\frac{\tau}{2}\right) \beta_1 \beta_2 \mathcal{P}''(Q_1)\mathcal{P}''(Q_2)}=\frac{\omega_1^6 (\beta_1 - \beta_2)^2}{4k^2 \pi^6 \beta_1\beta_2 b^4 c^8},$$

$$n_{23}^2=n_{32}^2=\frac{\pi^2 (abc)^2 \theta^2\left(\frac{1}{2}\right) (\beta_2 - \beta_3)^2}{\omega_1^2 k^2 b^2 \theta_1^2 \left(\frac{1}{2}\right) \beta_2 \beta_3 \mathcal{P}''(Q_2)\mathcal{P}''(Q_3)}=-\frac{\omega_1^6 (\beta_2 - \beta_3)^2}{4k^2 \pi^6 \beta_2\beta_3 a^8 b^4}.$$
Here we use the fact that (see \cite[p. 502]{Wang})
$$\theta\left(\frac{\tau}{2}\right)=q^{-\frac{1}{8}}a, \theta_1\left(\frac{\tau}{2}\right)=i q^{-\frac{1}{8}}c, \theta\left(\frac{1}{2}\right)=c, \theta_1\left(\frac{1}{2}\right)=a$$
with $q=e^{2 \pi i \tau}$. One has $1/\beta_i=\lambda_{11}\mathcal{P}(Q_i)+\lambda_{13}$ from \eqref{R1 R2} and the fact $R_1(Q_i)=1/\beta_i$. Therefore,
$$\frac{\beta_1-\beta_2}{\beta_1\beta_2}=\lambda_{11}(\mathcal{P}(Q_2)-\mathcal{P}(Q_1))=-\lambda_{11}\frac{\pi^2 c^4}{\omega_1^2},$$
\begin{equation}\label{beta}
\frac{\beta_1-\beta_3}{\beta_1\beta_3}=\lambda_{11}(\mathcal{P}(Q_3)-\mathcal{P}(Q_1))=-\lambda_{11}\frac{\pi^2 b^4}{\omega_1^2},
\end{equation}
$$\frac{\beta_2-\beta_3}{\beta_2\beta_3}=\lambda_{11}(\mathcal{P}(Q_3)-\mathcal{P}(Q_2))=-\lambda_{11}\frac{\pi^2 a^4}{\omega_1^2}.$$
It can be seen from \eqref{beta} that $\lambda_{11}\neq 0$. Similarly we have $\lambda_{21}=\alpha \lambda_{11}$ from the identities $R_2(Q_i)=\alpha/\beta_i+\gamma$. Breaking out the determinant \eqref{determinant}, one gets the following expression for $\phi$ (up to some constant $\lambda$)
$$-\beta_1\beta_2\beta_3 x^3 + 3\beta_1\beta_2\beta_3 k_1 x^2y + (\beta_3 n_{12}^2 + \beta_1 n_{23}^2-3\beta_1\beta_2\beta_3 k_1^2)xy^2 + $$
$$k_1(\beta_1\beta_2\beta_3 k_1^2 - \beta_3 n_{12}^2 - \beta_1  n_{23}^2)y^3 + (\beta_1\beta_2 + \beta_1\beta_3 + \beta_2\beta_3)x^2z - 2k_1 (\beta_1\beta_2 + \beta_1 \beta_3 + \beta_2\beta_3)xyz  $$
\begin{equation}\label{F1}
+(k_1^2(\beta_1\beta_2 + \beta_1\beta_3 + \beta_2\beta_3) - n_{12}^2 - n_{23}^2)y^2z - (\beta_1 + \beta_2 + \beta_3)xz^2 + k_1(\beta_1 + \beta_2 + \beta_3)yz^2 + z^3.
\end{equation}
Consequently, $\text{Res}(\phi_x/\lambda,\phi_y/\lambda,\phi_z/\lambda)=$
$$(-432)  (\beta_2 - \beta_3)^2  (\beta_1 - \beta_2)^2  n_{23}^4  n_{12}^4  (\beta_1 - \beta_3)^6  (\beta_2 n_{12}^2 - \beta_3 n_{12}^2 - \beta_1 n_{23}^2 + \beta_2 n_{23}^2)^2.$$
The term $\beta_2 n_{12}^2 - \beta_3 n_{12}^2 - \beta_1 n_{23}^2 + \beta_2 n_{23}^2$ is equal to
$$\frac{(\beta_1-\beta_2)(\beta_2-\beta_3)\omega_1^6}{4 k^2 \pi^6 b^4} \left(\frac{\beta_1-\beta_2}{c^8\beta_1\beta_2}+\frac{\beta_2-\beta_3}{a^8\beta_2\beta_3}\right)$$
$$=-\frac{\lambda_{11}\omega_1^4(\beta_1-\beta_2)(\beta_2-\beta_3)}{4k^2 \pi^4 a^4 c^4},$$
where the later equality comes from \eqref{beta}. Furthermore,
$$(n_{12}n_{23})^4=\frac{\omega_1^{24}(\beta_1-\beta_2)^4(\beta_2-\beta_3)^4}{2^8 k^8 \pi^{24} (abc)^{16} (\beta_1\beta_2)^2 (\beta_2\beta_3)^2} = \frac{\lambda_{11}^4 \omega_1^{16} (\beta_1-\beta_2)^2 (\beta_2-\beta_3)^2}{2^8 k^8 \pi^{16} a^8 b^{16} c^8}.$$
The later equality again comes from \eqref{beta}. Hence
$$\Delta_{\phi}=-\frac{1}{27}\text{Res}(\phi_x,\phi_y,\phi_z)=\frac{\lambda^{12}\lambda_{11}^6 \omega_1^{24}}{2^8 k^{12} \pi^{24} (abc)^{16}}(\beta_1-\beta_2)^6(\beta_1-\beta_3)^6(\beta_2-\beta_3)^6.$$
An alternative form of the discriminant can be established by using \eqref{beta}:
\begin{equation}\label{disc demo}
\Delta_{\phi}=16\left(\frac{\lambda \lambda_{11}^2 \pi \beta_1\beta_2\beta_3}{2 k \omega_1}\right)^{12} (abc)^8.
\end{equation}
This completes the proof of Proposition \ref{proposition disc of plane cubic}.
\end{proof}

We now simplify the formula \eqref{disc demo} by looking at the relationships between $\lambda, \lambda_{11}, k$ and $\beta_1\beta_2\beta_3$. It can be seen from \eqref{F1} that $\lambda\beta_1\beta_2\beta_3=-\phi(1,0,0)$. The transformation \eqref{R1 R2} means that if we write
$$x=\lambda_{11}X+\lambda_{12}Y+\lambda_{13},$$
$$y=\lambda_{21}X+\lambda_{22}Y+\lambda_{23}$$
then the affine curve $\phi(x,y,1)=0$ will be transformed to a Weierstrass equation \linebreak $-Y^2+4X^3-g_2X-g_3=0$. In addition, the inverse transformation
$$X=l_{11}x+l_{12}y+l_{13},$$
$$Y=l_{21}x+l_{22}y+l_{23}$$
would transform the Weierstrass equation $-Y^2+4X^3-g_2X-g_3=0$ to:
$$4l_{11}^3 x^3 + 12 l_{11}^2 l_{12} x^2y + 12 l_{11} l_{12}^2 xy^2 + 4 l_{12}^3 y^3 + (12 l_{11}^2 l_{13} - l_{21}^2)x^2 + $$
\begin{equation}\label{F2}
(24 l_{11} l_{12} l_{13} - 2 l_{21} l_{22}) xy + (12 l_{12}^2 l_{13} - l_{22}^2) y^2 + (12 l_{11} l_{13}^2 - 2 l_{21} l_{23} - l_{11} g_2)x +
\end{equation}
$$(12 l_{12} l_{13}^2 - 2 l_{22} l_{23} - l_{12} g_2)y + 4 l_{13}^3 - l_{23}^2 - l_{13} g_2 - g_3.$$
One can check that $l_{11}=\lambda_{22}/D, l_{12}=-\lambda_{12}/D, l_{21}=-\lambda_{21}/D$ and $l_{22}=\lambda_{11}/D$ with $D=\lambda_{11}\lambda_{22}-\lambda_{12}\lambda_{21}$. Compare the coefficients of $x^3$ and $x^2 y$ in \eqref{F1} and \eqref{F2}, we have
$$\left\{
    \begin{array}{ll}
      4 l_{11}^3=-\lambda \beta_1\beta_2\beta_3, \\
      12l_{11}^2 l_{12} = 3\lambda \beta_1\beta_2\beta_3 k_1.
    \end{array}
  \right.$$
Or
$$
  \left\{
    \begin{array}{ll}
      4 \lambda_{22}^3=-\lambda \beta_1\beta_2\beta_3 D^3, \\
      12 \lambda_{22}^2 \lambda_{12} = -3\lambda \beta_1\beta_2\beta_3 k_1 D^3.
    \end{array}
  \right.
$$
The second identity shows that $\lambda_{11}=-4\lambda_{22}^2/(\lambda\beta_1\beta_2\beta_3 D^2)$. Hence $\lambda\lambda_{11}^3\beta_1\beta_2\beta_3=-4$ from the first identity. The following result has thus been proved from \eqref{disc demo}.
\begin{thm}\label{theorem general cubic theta}
Let $C_{\phi}$ be a smooth plane cubic curve as in Proposition \ref{proposition disc of plane cubic}. Then the discriminant $\Delta_{\phi}$ of $\phi$ satisfies
$$\Delta_{\phi}=\frac{2^{16}}{(\lambda_{11}\lambda_{22}-\lambda_{12}\lambda_{21})^{12}}\left(\frac{\pi}{\omega_1}\right)^{12}(abc)^8.$$
\end{thm}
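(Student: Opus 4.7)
The plan is to combine the discriminant expression from Proposition \ref{proposition disc of plane cubic} with the algebraic structure of the linear coordinate change \eqref{R1 R2} that converts $C_{\phi}$ to Weierstrass form, so that the explicit parameters $\lambda,\lambda_{11},k,\beta_1\beta_2\beta_3$ collapse into the single invariant $D=\lambda_{11}\lambda_{22}-\lambda_{12}\lambda_{21}$. Starting point is \eqref{disc demo}:
$$\Delta_{\phi}=16\left(\frac{\lambda\,\lambda_{11}^2\,\pi\,\beta_1\beta_2\beta_3}{2k\omega_1}\right)^{12}(abc)^8,$$
so the whole task reduces to proving the scalar identity $\lambda\lambda_{11}^2\beta_1\beta_2\beta_3 / k = 4/D$ (sign is irrelevant due to the twelfth power).

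For the identity itself, I would exploit that the transformation $(x,y)=(\lambda_{11}X+\lambda_{12}Y+\lambda_{13},\lambda_{21}X+\lambda_{22}Y+\lambda_{23})$ sends the Weierstrass equation $-Y^2+4X^3-g_2X-g_3=0$ to $\phi(x,y,1)/\lambda=0$, and therefore its inverse $(X,Y)=(l_{ij})(x,y)$ with matrix $(1/D)\bigl(\begin{smallmatrix}\lambda_{22}&-\lambda_{12}\\-\lambda_{21}&\lambda_{11}\end{smallmatrix}\bigr)$ expands the Weierstrass cubic into the polynomial displayed in \eqref{F2}. Matching coefficients of $x^3$ and $x^2y$ in the two expressions \eqref{F1} and $\lambda\cdot$\eqref{F2} yields the two-equation system
$$4l_{11}^3 = -\lambda\beta_1\beta_2\beta_3, \qquad 12\,l_{11}^2 l_{12} = 3\lambda\beta_1\beta_2\beta_3\,k_1,$$
with $k_1=-\lambda_{12}/k$. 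Solving these gives (after eliminating $l_{12}$ via $l_{11}=\lambda_{22}/D$) the key relation $\lambda\lambda_{11}^3\beta_1\beta_2\beta_3=-4$.

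Next, I would use the auxiliary identity $\lambda_{21}=\alpha\lambda_{11}$, which was established in the proof of Proposition \ref{proposition disc of plane cubic} by comparing $R_2(Q_i)=\alpha/\beta_i+\gamma$ with $R_1(Q_i)=1/\beta_i$. This yields
$$D=\lambda_{11}\lambda_{22}-\lambda_{12}\lambda_{21}=\lambda_{11}(\lambda_{22}-\alpha\lambda_{12})=-\lambda_{11}k,$$
and combining with $\lambda\lambda_{11}^3\beta_1\beta_2\beta_3=-4$ gives
$$\frac{\lambda\lambda_{11}^2\beta_1\beta_2\beta_3}{k}=\frac{-4/\lambda_{11}}{k}=\frac{4}{D}.$$
Plugging this into \eqref{disc demo} produces $\Delta_{\phi}=16\bigl(2\pi/(D\omega_1)\bigr)^{12}(abc)^8 = (2^{16}/D^{12})(\pi/\omega_1)^{12}(abc)^8$, which is exactly the claim.

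The substantive obstacle is really all in Proposition \ref{proposition disc of plane cubic}, where the theta-function identities and the resultant computation absorb most of the work; the present theorem is essentially sign-and-coefficient bookkeeping. The most error-prone step is the comparison of coefficients, because \eqref{F1} has many terms and one must keep careful track of signs through the passage from $l_{ij}$ to $\lambda_{ij}$ and from $k_1$ to $k$. Once the relations $\lambda\lambda_{11}^3\beta_1\beta_2\beta_3=-4$ and $\lambda_{11}k=-D$ are in hand, the result is immediate.
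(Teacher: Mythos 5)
Your proposal is correct and follows essentially the same route as the paper: both start from \eqref{disc demo}, compare the $x^3$ and $x^2y$ coefficients of \eqref{F1} and \eqref{F2} via $l_{11}=\lambda_{22}/D$, $l_{12}=-\lambda_{12}/D$ to obtain $\lambda\lambda_{11}^3\beta_1\beta_2\beta_3=-4$, and use $\lambda_{21}=\alpha\lambda_{11}$ (equivalently $D=-\lambda_{11}k$) to conclude. The only difference is cosmetic: you isolate $D=-\lambda_{11}k$ as a separate step, whereas the paper folds it into the derivation of the key relation.
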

Let us look at the example when $\phi$ is given in the Weierstrass form $-y^2+4x^3-g_2x-g_3$. In this case, $\lambda_{11}=\lambda_{22}=1$ and $\lambda_{12}=\lambda_{21}=0$. We thus recover the classical formula $\Delta_{\phi}=2^{16}(\frac{\pi}{\omega_1})^{12}(abc)^8$.

From Remark \ref{remark alternative lines l}, one can also treat the other case where the line $l$ passes through 2-torsion points of $C_{\phi}$. Furthermore, the set of cubics $\phi$ in the above theorem forms an open dense subset of the space of all ternary cubics and we have thus obtained Theorem \ref{theorem intro disc cubic theta}.

\section{Plane quartics and Klein's formula}\label{section Klein}

In this section, we provide an overview to a beautiful formula of Klein on plane quartics, which are non-hyperelliptic curves of genus three. More concretely, let $C_F$ be a smooth plane curve over $\mathbb{C}$ given by a quartic $F$, let $\alpha_1,\alpha_2,\alpha_3,\beta_1,\beta_2,\beta_3$ be a symplectic basis of $H_1(C_F,\mathbb{Z})$ and let $\eta_1,\eta_2,\eta_3$ be the classical basis of holomorphic 1-forms of $\Omega_{\mathbb{C}}^1(C_F)$ defined in \cite[p. 329]{Ritzenthaler}. We construct from these the period matrix $[\Omega_1 \text{ } \Omega_2]$ whose entries are
$$(\Omega_1)_{ij}= \int_{\alpha_i}\eta_j \text{   and   } (\Omega_2)_{ij}=\int_{\beta_i} \eta_j, \text{  for  } i,j=1,2,3.$$
Denote by $\tau=\Omega_2^{-1}\Omega_1$, the discriminant $\Delta_F$ of $F$ satisfies the following formula (see \cite[p. 72]{Klein}, \cite[Theorem 2.2.3]{Ritzenthaler}):
\begin{equation}\label{Klein}
\Delta_F^2=\frac{2^{26}\pi^{54}}{(\det \Omega_2)^{18}}\prod_{\delta \text{ even }} \theta_{\delta}(0,\tau).
\end{equation}
Here $\theta_{\delta}$ is the Riemann theta function with characteristic $\delta=(\delta_1,\delta_2)$, where \linebreak $\delta_1,\delta_2\in \{0,1\}^3$, defined for any $z\in \mathbb{C}^3$ as:
$$\theta_{\delta}(z,\tau)=\sum_{n\in \mathbb{Z}^3}\exp 2\pi i \left(\frac{1}{2}(n+\delta_1)^t\tau(n+\delta_1)+(n+\delta_1)^t(z+\delta_2)\right).$$
The product in \eqref{Klein} runs over all 36 even theta characteristics of genus three. The characteristic $\delta$ is called even if the corresponding theta function $\theta_{\delta}$ is an even function in $z$. This formula should be compared with \eqref{Disc Weierstrass} in the cubic case. One can ask if it is possible to use determinantal representations to establish the formula \eqref{Klein} or not? For this, the authors in \cite[Corollary 5.3]{Manni} have obtained determinantal representations for plane quartics described by theta constants. More concretely, they use plane quartics' bitangents to construct the representations. Thus it might be interesting to explore the problem in this case.

\section*{Acknowledgement}

The author would like to thank Professor Dennis Eriksson for introducing him to the topic as well as providing him with important ideas, corrections and comments.

\vspace*{1cm}

\end{document}